\documentclass[a4paper, reqno, 11pt]{amsart}
\usepackage{amsmath, amssymb, graphics, epsfig, enumerate, amstext, amsthm, comment}
\usepackage{xcolor}
\usepackage[noadjust]{cite}
\usepackage[plainpages=false,colorlinks,hyperindex,pdfpagemode=None,bookmarksopen,linkcolor=blue,citecolor=blue,urlcolor=blue]{hyperref}

\topmargin -0.2in
\textwidth 6.25in
\textheight 9in
\oddsidemargin -0.2in
\evensidemargin -0.2in

%\parskip=\baselineskip

\numberwithin{equation}{section}

\newtheorem{theorem}{Theorem}[section]
\newtheorem{proposition}[theorem]{Proposition}
\newtheorem{corollary}[theorem]{Corollary}
\newtheorem{lemma}[theorem]{Lemma}

\theoremstyle{definition}

\newtheorem{example}[theorem]{Example}
\newtheorem{remark}[theorem]{Remark}

\theoremstyle{remark}
\newtheorem{problem}[theorem]{Problem}

\def\qed{\hfill\vbox{\hrule width 6 pt
\hbox{\vrule height 6 pt width 6 pt}} \medskip}

\def\IC{{\mathbb C}}
\def\IR{{\mathbb R}}
\def\IF{{\mathbb F}}
\def\IZ{{\mathbb Z}}

\def\bM{{\bf M}}

\def\bV{{\bf V}}
\def\bW{{\bf W}}

\def\b0{{\bf 0}}

\def\bV{{\bf V}}

\def\diag{{\rm diag}\,}

\def\[{\left [}
\def\]{\right ]}
\def\({\left (}
\def\){\right )}

\def\dfrac{\displaystyle\frac}

\def\<{{\langle}}
\def\>{{\rangle}}
\def\1{{\bf 1}}

\newcommand{\T}{{\thinspace\mathrm{t}}}

\newcommand{\bprob}{\begin{problem}}
\newcommand{\eprob}{\end{problem}}

\newcounter{example}
\setcounter{example}{1}

\protected\def\verythinspace{%
  \ifmmode
    \mskip0.33\thinmuskip
  \else
    \ifhmode
      \kern0.08334em
    \fi
  \fi
}

\begin{document}
\openup .72\jot
\title[Nonsurjective zero product preservers]
{Nonsurjective zero product preservers
\\ between matrix spaces over an arbitrary field}

\author[Li, Tsai, Wang and Wong]{Chi-Kwong Li, Ming-Cheng Tsai, Ya-Shu Wang \and Ngai-Ching Wong}

\address[Li]{Department of Mathematics, The College of William
\& Mary, Williamsburg, VA 13185, USA.}
\email{ckli@math.wm.edu}

\address[Tsai]{General Education Center, National Taipei University of Technology, Taipei 10608, Taiwan.}
\email{mctsai2@mail.ntut.edu.tw}

\address[Wang]{Department of Applied Mathematics, National Chung Hsing University, Taichung 40227, Taiwan.}
\email{yashu@nchu.edu.tw}

\address[Wong]{Department of Applied Mathematics, National Sun Yat-sen
  University, Kaohsiung, 80424, Taiwan; Department of Healthcare Administration and Medical Information, and Center of
Fundamental Science, Kaohsiung  Medical University, 80708 Kaohsiung, Taiwan.}
  \email{wong@math.nsysu.edu.tw}

\date{\today}

\begin{abstract}
A map $\Phi$ between matrices
is said to be zero  product preserving if
$$
\Phi(A)\Phi(B) = 0 \quad \text{whenever}\quad AB = 0.
$$
In this paper, we give concrete descriptions of an
additive/linear zero product  preserver $\Phi: \bM_n(\mathbb{F}) \rightarrow \bM_r(\mathbb{F})$
between matrix algebras of different dimensions over an arbitrary field $\mathbb{F}$.
In particular,
we show that if $\Phi$ is linear and preserves
zero products then
$$
\Phi(A)=  S\begin{pmatrix} R_1 \otimes A & 0 \cr 0 & \Phi_0(A)\end{pmatrix} S^{-1},
$$
for some invertible matrices $R_1$ in $\bM_k(\mathbb{F})$,
$S$ in $\bM_r(\mathbb{F})$ and a zero product preserving linear
map $\Phi_0: \bM_n(\mathbb{F}) \rightarrow \bM_{r-nk}(\mathbb{F})$ into
nilpotent matrices.
If $\Phi(I_n)$ is invertible, then $\Phi_0$ is vacuous.
\iffalse
Here,  $R_1$  is vacuous when $\Phi(I_n)$ is a nilpotent,
and  $\Phi_0$ is vacuous when $\Phi(I_n)$ is invertible.
\fi
In general, the structure of $\Phi_0$ could be quite arbitrary, especially
when $\Phi_0(\bM_n(\IF))$ has trivial multiplication, i.e.,
$\Phi_0(X)\Phi_0(Y) = 0$ for all $X, Y$ in $\bM_n(\mathbb{F})$.
We show that if
$\Phi_0(I_n) = 0$ or  $r-nk \le n+1$, then
$\Phi_0(\bM_n(\IF))$ indeed has trivial multiplication.
More generally, we characterize  subspaces $\bV$ of square matrices satisfying
$XY = 0$ for any $X, Y \in \bV$.
Similar results for
double zero product preserving maps are obtained.
 \end{abstract}

\subjclass[2000]{08A35, 15A86, 47B48}

\keywords{zero product preservers, double zero preservers; matrix algebras}
\maketitle

%\footnotetext{\today}

\section{Introduction}

Preserver problems
of matrices and operators attract a lot of attention;  see,
for example,  \cite{Semrl06, LT92,LP,CKLW03, Wong05, W-zpp, Monlar, LCLW18, LTWW20, GLS},
and the references therein.
Some authors study those additive or linear maps  $\Phi$  preserving zero products,
that is,
$$
\Phi(A)\Phi(B) = 0\quad \text{whenever}\quad AB = 0.
$$
See, for example, \cite{ABEV09, BS-zp,Bresar12,CLT87,BFGMP08, LW13,Li02,CLP}.
The classical results of Jacobson, Rickart, Kaplansky, Herstien, etc.\ (see, e.g., \cite{JR50, H56}),
%together with  the Skolem-Noether theorem,
ensure  that  every \emph{surjective} zero product preserving  {linear}
 map $\Phi: \bM_n(\mathbb{F}) \to \bM_n(\mathbb{F})$ between matrices over an arbitrary
 field $\IF$ is a scalar multiple of
an inner algebra isomorphism, namely,
$A \mapsto \alpha S^{-1}AS$,
for a nonzero scalar $\alpha$ and an invertible $S$ in $\bM_n(\mathbb{F})$.
  See, e.g.,  \cite[Theorems 2.6 and 3.1]{CKLW03}.
%We will give a more general, \emph{nonsurjective} version of this result in Theorem \ref{main2.5}.

The situation is quite different when $\Phi$ is not surjective.
%Indeed,   nonsurjective ring homomorphism of matrices can be
%complicated; see Example \ref{eg:counter-ringhom} and the following.
For example, let $f$ be any   map sending $n\times n$ square matrices to $p\times q$ rectangular matrices over any field $\IF$.
The map $\Phi: \bM_n(\IF)\to \bM_{p+q}(\IF)$ defined by $A\mapsto \left(
                                 \begin{array}{cc}
                                   0_p & f(A) \\
                                   0 & 0_q \\
                                 \end{array}
                               \right)\in \bM_{p+q}(\IF)$
satisfies that $\Phi(A)\Phi(B) = 0_{p+q}$ for any $A, B \in \bM_n(\IF)$.
It trivially preserves zero products.
One may impose extra assumptions such as being additive or linear to $f$
so that the map $\Phi$ will be additive or linear,
% and thus indeed a ring homomorphism or an algebra homomorphism,
respectively.
However, $\Phi$ is far away from being multiplicative in any case.

In this paper, we give  descriptions of the structures of an
additive or linear zero product preserver  $\Phi: \bM_n(\IF) \rightarrow \bM_r(\IF)$
for arbitrary dimensions $n$ and $r$.
More precisely, suppose  $S^{-1}\Phi(I_n)S = R \oplus N$
where $S \in \bM_r(\IF), R \in \bM_s(\IF)$ are invertible, and
$N \in \bM_{r-s}(\IF)$ is nilpotent with nil index $\nu$.
If $\Phi$ is additive, then $s = nk$ for some integer $k\geq 0$, and $\Phi$ assumes the form
$$
A \mapsto
      S \begin{pmatrix} R\Phi_1(A)  & 0 \cr 0 & \Phi_0(A) \cr\end{pmatrix} S^{-1},
%= S \begin{pmatrix} \Phi_1(A)R  & 0 \cr 0 & \Phi_0(A) \cr\end{pmatrix} S^{-1},
$$
where $\Phi_1: \bM_n(\IF)\to \bM_{nk}(\IF)$ is a ring homomorphism
such that $\Phi_1(A)R = R \Phi_1(A)$ for all $A \in \bM_{n}$,
and $\Phi_0:\bM_n(\IF)\to \bM_{r-nk}(\IF)$ is an
additive zero product preserving map such that $\Phi_0(A)^{\nu+1} = 0$
for every $A$.

If $\Phi$ is linear then, by using a suitable  $S\in \bM_r(\IF)$ to write $S^{-1}\Phi(I_n)S = R\oplus N$,
we have $R=R_1 \otimes I_n$ for an $R_1 \in \bM_k(\IF)$ such that $R\Phi_1$ assumes the form
$A\mapsto (R_1 \otimes I_n)(I_k\otimes A)=R_1\otimes A$.
The map $\Phi_0$ could have wild structure.
Nevertheless, we obtain additional information on $\Phi_0$
under some mild assumptions.
In particular, if $\Phi(I_n)$ is invertible,  $\Phi$ assumes the form $A\mapsto S(R_1 \otimes A)S^{-1}$;
if $\Phi(I_n)$ is a nilpotent, we have $\Phi=\Phi_0$.
More generally, if $\Phi(I_n)$ is diagonalizable, then $N=0$ and thus $\nu=1$; in
this case, $\Phi_0(A)\Phi_0(B)=0$
for all $A,B\in \bM_n(\IF)$.

Our paper is organized as follows.
In Section \ref{s:prel}, we  collect some preliminary results.
In Section \ref{s:0p}, we study  additive and linear zero product preservers
between matrix algebras of arbitrary sizes over an arbitrary field.
We also study linear maps $\Phi$ between matrix algebras preserving  double zero
products, that is,
$$
\Phi(A)\Phi(B)=\Phi(B)\Phi(A)=0\quad\text{whenever}\quad AB=BA=0.
$$
In this case, $\Phi$ assumes the form
\begin{equation*}% \label{DZP-main-form}
A \mapsto  S
\begin{pmatrix}
R_1 \otimes A & 0 & 0 \cr
0 & R_2 \otimes A^{\verythinspace\mathrm{t}} & 0 \cr
0 & 0 & \Phi_0(A)\cr\end{pmatrix}S^{-1}
\end{equation*}
for some invertible matrices $R_1\in \bM_p(\IF), R_2\in \bM_q(\IF)$ and $S\in \bM_r(\IF)$.
Here the linear map $\Phi_0: \bM_n(\IF)\to \bM_{r-n(p+q)}(\IF)$ preserves double zero products and
sends diagonalizable matrices to nilpotent elements.
In Section \ref{s:3.3}, we put attention on  additive zero product preserving maps
$\Phi_0$ into nilpotent matrices.
In particular, we obtain sufficient and necessary conditions to ensure that the range space of such a $\Phi_0$ has
trivial multiplication.
Finally,  we study in details the structure of the  range space of $\Phi_0$ when
it has trivial multiplication.

\section{Preliminaries}\label{s:prel}

In our discussion, we always assume that $\IF$ is an arbitrary field, and
$\{E_{11}, E_{12}, \dots, E_{nn}\}$ the standard basis for the algebra $\bM_n(\IF)$ of $n\times n$ matrices over $\IF$.
In other words, $E_{ij}=e_i{e_j}^{\tiny{\textrm{t}}}$ where $\{e_1, \ldots, e_n\}$ is the standard
basis for the vector space $\IF^n$ over $\IF$, and $A^\T$ denotes the transpose of a rectangular matrix $A$.
An  {idempotent} is a matrix $E$  satisfying $E^2=E$.  Two idempotents  $E,F\in \bM_n(\IF)$ are  {disjoint} if $EF=FE=0$.
A matrix $N$ is a  {nilpotent} if $N^\nu=0$ for some positive integer $\nu$.

\begin{lemma} \label{P1P2}
Let  $n$ be a positive integer with $n \ge 2$.
\begin{enumerate}[{\rm (a)}]
\item
The linear space $\bM_n(\IF)$ has the following basis consisting
of rank one idempotents
$$
\big\{E_{jj}: 1 \le j \le n\big\} \cup \big\{E_{ii} + E_{ij}: 1 \le i \le n, i\ne j\big\}.
$$
For $i \ne j$, the matrices $E_{ii} + E_{ij}$ and $E_{jj}-E_{ij}$
are disjoint rank one idempotents.

%    \item The vector space $\bM_n(\mathbb{F})$ is spanned by its rank one idempotents.

%\item[{\rm (b)}]
%Suppose $\IF \ne \IZ_2$. There is $a\ne 0$ in $\IF$ such that $a^2 + 1 \neq 0$, and
% the linear space $\bS_n(\IF)$ has the following basis consisting of symmetric rank one idempotents
%$$
%\big\{E_{jj}: 1 \le j \le n\big\} \cup
%\big\{(1+a^2)^{-1}(a^2 E_{ii} + a(E_{ij}+E_{ji}) + E_{jj}): 1 \le i < j \le n\big\}.
%$$
%For $i\ne j$,
%$(1+a^2)^{-1}(a^2 E_{ii} + a(E_{ij}+E_{ji}) + E_{jj})$ and
%$(1+a^2)^{-1}( E_{ii} - a(E_{ij}+E_{ji}) + a^2E_{jj})$
%are disjoint rank one idempotents.

\item  Every idempotent $A$ in $\bM_n(\IF)$ is a sum
of $k$ disjoint rank one idempotents, where $k$ is the rank of $A$.
\item Suppose the characteristic of $\IF$ is not 2.
The sum of two idempotents is an idempotent if and only if
they are disjoint.

    \item Every non-invertible matrix in $\bM_n(\mathbb{F})$ is a product of idempotents.

    \item The ring $\bM_n(\mathbb{F})$ is generated by its idempotents.

 %      In fact,  every  matrix can be written as  a linear combination of three idempotents.
%\item[{\rm (g)}] Every non-invertible matrix in $\bM_n(\mathbb{F})$ is a linear combination of three idempotents.
\end{enumerate}
\end{lemma}
\begin{proof}
Assertions (a)--(c) can be  verified directly.
Assertion (d)  is shown in \cite[Theorem]{Erdos67}.
Assertion (e) is a consequence of  (d) and the fact that every matrix can
be written as a sum of rank one matrices.
%Assertion (f) follows from (a), while the last stateman \cite[Theorem 1]{Pazzis10}.
\end{proof}

The following result can be found in \cite[Corollary 6.7.2]{ZTC06} under the assumption that
the characteristic of $\IF$ is not   $2$ and $n\geq 3$.
Below, we give a self contained proof of the result without the restrictions.

\begin{theorem}\label{thm:ring-hom}
Let  $\Phi: \bM_n(\IF)\to \bM_r(\IF)$ be a
ring homomorphism.  Then there is an invertible $S$ in $\bM_r(\IF)$ and a unital ring
homomorphism $\tau:\mathbb{F}\to \bM_k(\IF)$ with $r-nk\geq 0$ such that
  $$
  \Phi(A) = S[(\sum_{i,j=1}^n \tau(a_{ij})\otimes E_{ij})\oplus 0_{r-nk}]S^{-1}\quad\text{for all $A=(a_{ij})\in \bM_n(\IF)$.}
  $$
\end{theorem}
\begin{proof}
 Replacing $\Phi$ with the map $X \mapsto S_1^{-1} \Phi(X)S_1$ for some
invertible $S_1$ in $\bM_r(\IF)$,
we can assume that $\Phi(I_n)=I_{r_1}\oplus 0_{r_2}$ with $r=r_1+r_2$.
Since $\Phi(A)=\Phi(I_nAI_n)=\Phi(I_n)\Phi(A)\Phi(I_n)$,
we may further assume that $\Phi(I_n)=I_r$.
%Let $\{E_{ij}: i, j = 1, 2, \ldots, n\}$ be the matrix units in $\bM_n(\IF)$.
Since $E_{ij}E_{kl} = \delta_{jk}E_{il}$, we have
\begin{align}\label{eq:ijkl}
  \Phi(E_{ij})\Phi(E_{kl}) = \delta_{jk}\Phi(E_{il}), \quad i,j,k,l=1,2,\ldots, n,
\end{align}
where $\delta_{ij}=1$ when $i=j$, and $0$ otherwise.
Moreover,
$$
I_r = \Phi(I_n) = \sum_{i=1}^n \Phi(E_{ii}).
$$
Replacing $\Phi$ with the map $X \mapsto S_2^{-1} \Phi(X)S_2$ for some
invertible $S_2$ in $\bM_r(\IF)$,
we can assume that the idempotents
$$
\Phi(E_{ii}) = 0_{k_1} \oplus \cdots \oplus 0_{k_{i-1}} \oplus I_{k_i} \oplus 0_{k_{i+1}}\oplus \cdots \oplus 0_{k_n}, \quad i=1,\ldots, n.
$$
Here, $k_1 + \cdots + k_n=r$.

Let $m=r-k_1 - k_2$.
It follows from \eqref{eq:ijkl} that
$$
\Phi(E_{12}) =
\begin{pmatrix} B_{11} & B_{12} \cr
B_{21} & B_{22} \cr \end{pmatrix}\oplus 0_m
\quad\hbox{and}\quad
\Phi(E_{21}) = \begin{pmatrix} C_{11} & C_{12}  \cr
C_{21} & C_{22} \cr\end{pmatrix}\oplus 0_m,
$$
where $B_{ij}, C_{ij}$  are $k_i\times k_j$ matrices for $i,j=1,2$.
Since $E_{11}E_{12} = E_{12}$ and $E_{12}E_{11} =0$, we have $B_{11}$, $B_{22}$ and $B_{21}$ are all zero matrices.
Similarly, $C_{11}$, $C_{22}$ and $C_{12}$ are also zero matrices.
Hence,
\begin{align}\label{eq:1221}
\Phi(E_{12}) =
\begin{pmatrix} 0 & B_{12} \cr
0 & 0 \cr \end{pmatrix}\oplus 0_m
\quad\hbox{and}\quad
\Phi(E_{21}) = \begin{pmatrix} 0 & 0  \cr
C_{21} & 0 \cr\end{pmatrix}\oplus 0_m,
\end{align}
On the other hand,
$(E_{12}+E_{21})^2 = E_{11} + E_{22}$
implies
$$
\begin{pmatrix} 0 & B_{12} \cr
C_{21} & 0 \cr\end{pmatrix}^2
=
\begin{pmatrix} B_{12}C_{21} & 0  \cr
0 & C_{21}B_{12} \cr\end{pmatrix}
=
\begin{pmatrix} I_{k_1} & 0  \cr
0 & I_{k_2}  \cr\end{pmatrix}.
$$
This ensures $k_1=k_2$ and $B_{12} = C_{21}^{-1}$.
Let $k=k_1$.

Applying  a similar argument to other $(i,j)$ pairs, we see that
$$
\Phi(E_{jj}) = E_{jj} \otimes I_k,\quad
\Phi(E_{ij}) = E_{ij} \otimes B_{ij}\  \text{for}\ i < j,\quad
\Phi(E_{ij}) = E_{ij} \otimes B_{ji}^{-1}\ \text{for}\ j < i.$$
In particular, $r/n =k$.

Replacing $\Phi$ by the map $X \mapsto S_3\Phi(X)S_3^{-1}$
with $S_3 = I_k \oplus B_{12} \oplus B_{13} \oplus  \cdots \oplus B_{1n}$, we can further assume that
$$
B_{12} = \cdots = B_{1n} = I_k.
$$
Actually, if $n\geq 3$ we also have
$$
\Phi(E_{ij}) = E_{ij}\otimes I_k \quad
\hbox{ for all } i,j = 1,\ldots, n.
$$
To see this, observe $E_{ij}=(E_{i1} + E_{1j} + E_{ij})^2$ for $1<i<j$.  We thus have
$$
\Phi(E_{ij}) = (\Phi(E_{i1}) + \Phi(E_{1j}) + \Phi(E_{ij}))^2.
$$
This gives
$$
E_{ij} \otimes B_{ij} = (E_{i1} \otimes I_k + E_{1j} \otimes I_k + E_{ij} \otimes B_{ij})^2
= E_{ij} \otimes I_k.
$$

Reordering the basic vectors, i.e., applying a permutation similarity, we can assume
instead
\begin{align}\label{eq:stand-Eij}
\Phi(E_{ij}) = I_k\otimes E_{ij} \quad
\hbox{ for all } i,j = 1,\ldots, n.
\end{align}
  For any $a$ in $\mathbb{F}$, the matrix $\Phi(aI_n)$ commutes with all $\Phi(E_{ij})= I_k\otimes E_{ij}$.
  Thus,  $\Phi(aI_n)=\tau(a)\otimes I_n$ for some $\tau(a)\in \bM_k(\IF)$.
   It is easy to see that $a\mapsto \tau(a)$ is a unital ring homomorphism from $\mathbb{F}$ into $\bM_k(\IF)$.
  Consequently,
  $$
  \Phi(A)= \sum_{i,j=1}^{n} \Phi(a_{ij}E_{ij})=\sum_{i,j=1}^{n} \Phi(a_{ij}I_n)\Phi(E_{ij})
  = \sum_{i,j=1}^{n} \tau(a_{ij})\otimes E_{ij}.
  $$
 \vskip -0.45in
\end{proof}

When the ring homomorphism  $\Phi$ in Theorem \ref{thm:ring-hom} is also linear, we see that
\begin{align*}
  \Phi(A)&= \sum_{i,j=1}^{n} \Phi(a_{ij}E_{ij})=\sum_{i,j=1}^{n} a_{ij}\Phi(E_{ij})\\
  &= \sum_{i,j=1}^{n} a_{ij}I_k\otimes E_{ij} = I_k\otimes A, \quad\forall A=(a_{ij}) \in \bM_n(\IF).
\end{align*}
Consequently, we have

\begin{theorem}%[{see, e.g., \cite{LWWT-Idem}}]
\label{thm:alg-homo}\label{main2.5}
Suppose $\Phi: \bM_n(\IF) \rightarrow \bM_r(\IF)$
is an algebra homomorphism.
Then there exist a nonnegative integer $k$ with $t = r-nk \geq 0$,
and  an invertible matrix $S$ in $\bM_r(\IF)$ such that
 $\Phi$  assumes the form
\begin{align}\label{unital-form}
A \mapsto S
\left(
  \begin{array}{cc}
    I_k \otimes A& 0\\
    0 & 0_t \\
  \end{array}
\right)S^{-1}.
\end{align}
\iffalse
\begin{enumerate}[{\rm (a)}]
\item
Assume $\mathbb{F}=\mathbb{C}$.  If $\Phi(A) = \Phi(A)^*$ for every rank
one projection $A$, then $S$ can be chosen to  be a unitary, i.e.,
$S^{-1} = S^*$.

\item Assume $\mathbb{F}=\mathbb{C}$.
If $\Phi(A)= \Phi(A)^{\verythinspace\mathrm{t}}$  for every
rank one real symmetric idempotent $A$,
then  $S$ can be chosen to be complex orthogonal, i.e., $S^{-1}=S^{\verythinspace\mathrm{t}}\in \bM_n(\mathbb{C})$.

\item Assume $\mathbb{F}=\IR$.
If $\Phi(A)= \Phi(A)^{\verythinspace\mathrm{t}}$  for every rank one symmetric idempotent $A$, then
$S$ can be chosen to be orthogonal, i.e., $S^{-1}=S^{\verythinspace\mathrm{t}}\in \bM_n(\mathbb{R})$.
\end{enumerate}
\fi
\end{theorem}

One might expect
that a ring homomorphism $\Phi$ between matrices assumes a form
similar to \eqref{unital-form}; namely,
 \begin{align}\label{eq:goodringform}
  A\mapsto \alpha S(A_{\tau_1} \oplus \cdots\oplus A_{\tau_k} \oplus 0) S^{-1}
  \end{align}
for some unital ring endomorphisms $\tau_1, \ldots, \tau_k$ of $\IF$.
Here, $A_\tau$ denotes the matrix $(\tau(a_{ij}))$ when $A=(a_{ij})$.
If \eqref{eq:goodringform} holds and  $\Phi$ is also linear, then all $\tau_k$ are the identity map and \eqref{eq:goodringform}
reduces to \eqref{unital-form}.
However, the following  example tells that it is not always the case.

\begin{example}\label{eg:ringhom}
 Let $\mathbb{F}$ be a purely transcendental extension over another field $\mathbb{K}$, for example $\mathbb{R}/\mathbb{Q}$.
  According to \cite[Corollary 1' in p.~124]{SZbook75},
there is  a nonzero additive derivation $x\mapsto x'$  of  $\mathbb{F}$.
Consider the unital ring homomorphism $\tau: \mathbb{F} \to \bM_2(\mathbb{F})$ defined by
\begin{align}\label{eq:derivation}
                        \tau(a)= \left(\begin{array}{cc} a & a' \\ 0 & a \\ \end{array}\right).
\end{align}
  Note that $\tau(a)$ is not diagonalizable whenever $a'\neq 0$.
  Consequently, any ring homomorphism $\Phi:\bM_n(\IF)\to \bM_r(\IF)$ with $r\geq 2n$ defined by $\tau$ as in Theorem \ref{thm:ring-hom}
  does not
  assume the form \eqref{eq:goodringform}.
\end{example}

\section{Additive and Linear Maps Preserving Zero Products}\label{s:0p}

%\subsection{Zero products preservers}\label{s:3.1}

% Let $\IF$ be any field.
We study those additive/linear maps $\Phi: \bM_n(\IF)\rightarrow \bM_r(\IF)$
preserving zero products, that is,
$$\Phi(A)\Phi(B) = 0_r \qquad \hbox{ whenever } A, B \in \bM_n(\IF) \hbox{ satisfy }
AB = 0_n.
$$
We need the following result, which is known as the \emph{fitting decomposition}.

\begin{lemma}[{See, e.g., \cite[Theorem A.0.4] {ZTC06}}]\label{nil}
Every $A \in \bM_n(\mathbb{F})$ is similar to a direct sum
 $R\oplus N$  of an invertible matrix $R\in \bM_s(\IF)$  and a nilpotent matrix  $N\in \bM_{n-s}(\IF)$ such that
$N$ is a direct sum of upper triangular Jordan blocks
for the eigenvalue zero of $A$. Here, $R$ or $N$ can be vacuous.
In particular, if $A$ is diagonalizable (resp.\ an idempotent), then $N = 0_{n-s}$ (resp.\ and $R = I_s$).
\end{lemma}

By Lemma \ref{nil}, there is an invertible matrix $S$ in $\bM_r(\IF)$ such that
$$
S^{-1}\Phi(I_n)S = R \oplus N,
$$
where $R$ in $\bM_s(\IF)$ is invertible, and $N$ in $\bM_{r-s}(\IF)$ is nilpotent such that
$N$ is a direct sum of upper triangular  Jordan blocks
 for the eigenvalue zero of $\Phi(I_n)$.
Furthermore, the size $\nu$ of the largest  Jordan block of $N$ is
the \emph{nil index} of the nilpotent matrix $N$, which
is the smallest positive integer $\nu$ such that $N^\nu=0$.
\begin{comment}
If $S_1^{-1}\Phi(I_n)S_1 = R_1\oplus N_1$ is another direct sum of an invertible matrix $R_1$ and
a nilpotent matrix $N_1$ for an invertible matrix $S$,
then
$$
S(R^\nu\oplus 0)S^{-1}=(S(R\oplus N)S^{-1})^\nu = (S_1(R_1\oplus N_1)S_1^{-1})^\nu=S_1(R_1^\nu\oplus N_1^\nu)S_1^{-1}.
$$
Since $R, R_1, S, S_1$ are all invertible, by counting  ranks we conclude that $N_1^{\nu}=0$, and thus the nil index of $N_1$ is
not greater  than that of $N$.  By symmetry, we see that
  $N_1, N$ have the same nil index $\nu$, and $R_1, R$ have the same rank $s$.
\end{comment}
If $S_1^{-1}\Phi(I_n)S_1 = R_1\oplus N_1$ is another direct sum of an invertible matrix $R_1$ and
a nilpotent matrix $N_1$ for an invertible matrix $S_1$,
then for any $k\geq1$,
$$
S(R^k\oplus N^k)S^{-1}=(S(R\oplus N)S^{-1})^k = (S_1(R_1\oplus N_1)S_1^{-1})^k=S_1(R_1^k\oplus N_1^k)S_1^{-1}.
$$
Since $R, R_1, S, S_1$ are all invertible, by counting  ranks we conclude that
the nilpotent matrices $N$ and $N_1$ have the same Jordan form.
Therefore, we see that   $N_1, N$ have the same nil index $\nu$, and $R_1, R$ have the same rank $s$.
It is clear that $s$ is the rank of the $r\times r$ matrix $\Phi(I_n)^r$.
When the nil index $\nu=1$, that is $N=0$, we say that $\Phi(I_n)$ \emph{has a zero nilpotent part};
in this case, all the zero Jordan blocks of
$\Phi(I_n)$ are $1\times 1$.

\begin{theorem}\label{main}
%Let the underlying field $\mathbb{F}$ be arbitrary.
Suppose  $\Phi: \bM_n(\IF) \rightarrow \bM_r(\IF)$ is
an additive map preserving zero products. Then
there are invertible matrices  $S$ in $\bM_r(\IF)$ and $R_1$ in $\bM_k(\IF)$
such that $\Phi$ has the form
\begin{equation} \label{main-form}
A \mapsto
      S \begin{pmatrix} (R_1\otimes I_n)\Phi_1(A)  & 0 \cr 0 & \Phi_0(A) \cr\end{pmatrix} S^{-1}.
%= S \begin{pmatrix} \Phi_1(A)R  & 0 \cr 0 & \Phi_0(A) \cr\end{pmatrix} S^{-1}
\end{equation}
Here,
$\Phi_1: \bM_n(\IF) \rightarrow \bM_{nk}(\IF)$ has the form
$(a_{ij}) \mapsto \sum_{ij} \tau(a_{ij}) \otimes E_{ij} \in \bM_k \otimes \bM_n$
for a unital
ring homomorphism $\tau: \IF \rightarrow \bM_k(\IF)$, such that
$R_1\tau(a) = \tau(a)R_1$ for all $a \in \IF$,
and  $\Phi_0: \bM_n(\IF) \rightarrow \bM_{r-kn}(\IF)$ is a zero product preserving
additive map  into nilpotent matrices such that
$\Phi_0(I_n)$ commutes with $\Phi_0(A)$ for all $A\in\bM_n(\IF)$,
and the product of any $\nu+1$  elements in
$\Phi_0(\bM_n(\IF))$ is zero if $\Phi_0(I_n)$
has nil index $\nu$.

In particular, $\Phi(I_n) = S[(R_1 \otimes I_n) \oplus \Phi_0(I_n)]S^{-1}$,
and the following hold.
\begin{itemize}
\item[{\rm (1)}]  $\Phi(I_n)$ is invertible if and only if
$\Phi_0$ is vacuous.
\item[{\rm (2)}] $\Phi(I_n) = I_r$ if and only if $\Phi_0$ is vacuous
and $R_1 = I_k$.
\item[{\rm (3)}] $\Phi(I_n)$ is nilpotent if and only if $\Phi_1$ is vacuous.
\item[{\rm (4)}] If $\Phi(I_n)$ has a zero nilpotent part, i.e., $\Phi_0(I_n) = 0$, then
$\Phi_0(X)\Phi_0(Y) = 0$ for all $X, Y \in \bM_n(\IF)$.
\end{itemize}
\end{theorem}

We need the following elementary lemma to prove Theorem \ref{main}.
We sketch a proof for easy reference.
% since we need to refer to some arguments in it.

\begin{lemma}\label{lem:zp}
%Let the underlying field $\IF$ be arbitrary.
Suppose  $\Phi: \bM_n(\IF) \rightarrow \bM_r(\IF)$ is an additive map preserving zero products.
Then
$$
\Phi(C)\Phi(AB)=\Phi(CA)\Phi(B) \quad \hbox{ for all } \
 A, B, C \in \bM_n(\IF).
$$
Consequently,
\begin{equation} \label{iab=ab}
\Phi(I_n)\Phi(AB)=\Phi(A)\Phi(B) \quad \hbox{ for all } \
A, B \in \bM_n(\IF),
\end{equation}
and
\begin{equation} \label{ia=ai}
\Phi(I_n) \Phi(A) = \Phi(A)\Phi(I_n) \quad \hbox{ for all } \ A \in \bM_n(\IF).
\end{equation}
\begin{enumerate}[\rm (a)]
    \item If $\Phi(I_n)$ is invertible then
$A\mapsto \Phi(I_n)^{-1}\Phi(A)$ is a ring homomorphism from $\bM_n(\IF)$ into $\bM_r(\IF)$.
    \item If $\Phi(I_n)^\nu=0$
    then the product of any
    $\nu+1$ elements from the range of $\Phi$ is zero, i.e.,
    $$
    \Phi(A_1)\Phi(A_2)\cdots\Phi(A_{\nu+1})=0
    \quad\hbox{ for all } \ A_1,A_2,\ldots, A_{\nu+1}\in \bM_n(\IF).
    $$
    In particular, if $\Phi(I_n)=0$ then
    the range of $\Phi$ has trivial multiplication, i.e.,
    $$
    \Phi(A)\Phi(B)=0 \quad\hbox{ for all } \ A,B\in \bM_n(\IF).
    $$
\end{enumerate}
\end{lemma}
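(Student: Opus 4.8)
The plan is to reduce everything to the single displayed identity $\Phi(C)\Phi(AB)=\Phi(CA)\Phi(B)$, since the remaining assertions follow from it by routine substitutions. Indeed, putting $C=I_n$ gives \eqref{iab=ab}, and then putting $B=I_n$ in \eqref{iab=ab} gives \eqref{ia=ai}. For (a), if $\Phi(I_n)$ is invertible I would set $\Psi(A)=\Phi(I_n)^{-1}\Phi(A)$; using \eqref{ia=ai} to commute $\Phi(A)$ past $\Phi(I_n)^{-1}$ and then \eqref{iab=ab}, one checks that $\Psi(A)\Psi(B)=\Phi(I_n)^{-1}\Phi(AB)=\Psi(AB)$ and $\Psi(I_n)=I_r$, so $\Psi$ is a unital ring homomorphism. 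For (b), an easy induction on \eqref{iab=ab} yields $\Phi(A_1)\cdots\Phi(A_m)=\Phi(I_n)^{m-1}\Phi(A_1\cdots A_m)$, so if $\Phi(I_n)^\nu=0$ then any product of $\nu+1$ values of $\Phi$ vanishes; the case $\nu=1$ is the final assertion.

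So the whole weight falls on the core identity. First I would prove it when the middle factor is an idempotent: for any idempotent $P$ and arbitrary $B,C$,
$$\Phi(C)\Phi(PB)=\Phi(CP)\Phi(B).$$
This is a Peirce-type splitting. Writing $B=PB+(I_n-P)B$, using additivity and the relation $CP\,(I_n-P)B=0$, one gets $\Phi(CP)\Phi(B)=\Phi(CP)\Phi(PB)$; writing $C=CP+C(I_n-P)$ and using $C(I_n-P)\,PB=0$, one gets $\Phi(C)\Phi(PB)=\Phi(CP)\Phi(PB)$. Comparing the two displays yields the claim. The only inputs here are additivity, zero-product preservation, and $P^2=P$.

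Next I would observe that the core identity is \emph{closed under products in the middle slot}: if it holds for $A_1$ and for $A_2$ (for all $B,C$), then applying it first to $A_1$ with right factor $A_2B$ and then to $A_2$ with left factor $CA_1$ shows it holds for $A_1A_2$. Combined with the idempotent case, the identity therefore holds for every product of idempotents, hence for every \emph{non-invertible} $A$ by Lemma \ref{lem:known-results}(c). Finally, the expression $\Phi(C)\Phi(AB)-\Phi(CA)\Phi(B)$ is additive in $A$ for fixed $B,C$ (immediate from additivity of $\Phi$), and for $n\ge 2$ every matrix decomposes as $A=\sum_{i,j}a_{ij}E_{ij}$, a sum of rank-one, hence non-invertible, matrices. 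Thus the identity extends from non-invertible matrices to all $A$, completing the proof.

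The main obstacle is exactly this last extension. Because $\Phi$ is only assumed \emph{additive}, it need not commute with scalar multiplication --- field endomorphisms genuinely intervene in the eventual structure theorem --- so one cannot prove the identity for a general $A$ by writing $A$ as a \emph{linear} combination of idempotents (as in Lemma \ref{lem:known-results}(b)) and pulling scalars through $\Phi$. In fact the additive group generated by the idempotents is properly smaller than $M_n$, since every element of it has trace lying in the prime subring $\mathbb{Z}\cdot 1_{\mathbb F}$ of $\mathbb{F}$, so additivity over idempotents alone is insufficient. The device that circumvents this is the multiplicative closure above together with the fact that \emph{products} of idempotents already exhaust the non-invertible matrices, whose scalar multiples $a_{ij}E_{ij}$ then additively span all of $M_n$ when $n\ge 2$. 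The degenerate case $n=1$, where $M_1=\mathbb F$ is a field and the zero-product hypothesis is essentially vacuous, is handled separately.
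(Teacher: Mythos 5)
Your proposal is correct for $n\ge 2$ and takes essentially the same route as the paper's proof: the idempotent case rests on the identical Peirce-type splitting (the paper factors the very same two zero products $(C-CE)EB=0$ and $CE(B-EB)=0$), and your extension step --- multiplicative closure in the middle slot, Erdos's theorem for non-invertible matrices, then additivity over the rank-one summands $a_{ij}E_{ij}$ --- is precisely an inlined version of the paper's appeal to Lemma \ref{lem:known-results}(d), whose own proof is stated there as ``(c) plus the fact that every matrix is a sum of rank-one matrices,'' so you have merely made explicit the closure properties the paper leaves implicit.

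The only shared caveat is $n=1$: you defer it (``handled separately'') exactly as the paper calls it ``obvious,'' but since, as you yourself note, the zero-product hypothesis is vacuous there, the conclusion can genuinely fail for additive maps --- for instance the additive ($\mathbb{F}_2$-linear) map $\Phi$ on $M_1(\mathbb{F}_4)$ with $\Phi(1)=\Phi(\omega)=1$, hence $\Phi(\omega^2)=\Phi(\omega)+\Phi(1)=0$, preserves zero products yet violates \eqref{iab=ab} because $\Phi(1)\Phi(\omega\cdot\omega)=0\neq 1=\Phi(\omega)\Phi(\omega)$ --- so this is a defect of the lemma as stated (it does hold for \emph{linear} maps when $n=1$), not a gap in your argument relative to the paper's.
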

\begin{proof}
We  follow  the proof of \cite[Lemma 2.1]{CKLW03}.
The case $n=1$ is obvious.  Assume below that $n\geq 2$.
Let $E=E^2$ in $\bM_n(\IF)$.  For any $B, C$ in $\bM_n(\IF)$, consider
$$
(C - CE)EB = CE(B-EB)=0.
$$
By the zero product preserving property, we have
$$
(\Phi(C) - \Phi(CE))\Phi(EB)=\Phi(CE)(\Phi(B)-\Phi(EB))=0.
$$
It follows
$$
\Phi(C)\Phi(EB)=\Phi(CE)\Phi(EB)= \Phi(CE)\Phi(B).
$$
Since  $\bM_n(\IF)$ is generated by its idempotents as a ring by
Lemma \ref{P1P2},
$$
\Phi(C)\Phi(AB)=\Phi(CA)\Phi(B), \quad A,B,C\in \bM_n(\IF).
$$
Putting $C=I$, and putting $B = C = I$, respectively, we establish \eqref{iab=ab} and \eqref{ia=ai}.
It thus follows (a).

For (b), in view of \eqref{iab=ab} and the assumption $\Phi(I_n)^\nu=0$, we have
\begin{align*}
&\ \Phi(A_1)\Phi(A_2)\Phi(A_3)\cdots
\Phi(A_{\nu+1})
= \Phi(I_n)\Phi(A_1A_2)\Phi(A_3)\cdots\Phi(A_{\nu+1})
= \cdots \\
= &\ \Phi(I_n)^\nu\Phi(A_1A_2A_3\cdots A_{\nu+1})=0
\quad\hbox{ for all } \ A_1,A_2,\ldots, A_{\nu+1}\in \bM_n(\IF).
\end{align*}
\vskip -.2in\end{proof}

\begin{proof}[Proof of Theorem \ref{main}]
By Lemma \ref{nil},
we may assume that $\Phi(I_n)
 =  S(R\oplus N)S^{-1}$ for invertible matrices $R\in \bM_s(\IF)$, $S\in \bM_r(\IF)$, and
a nilpotent matrix $N\in \bM_{r-s}(\IF)$.
We may replace $\Phi$ by $S^{-1}\Phi(\cdot)S$ and assume that $\Phi(I_n)=R\oplus N$.
Let
$$
\Phi(X) =
\begin{pmatrix} Y_{11} & Y_{12} \cr Y_{21} & Y_{22} \cr \end{pmatrix},
$$
where $Y_{11}\in \bM_s(\IF)$. By \eqref{ia=ai} in Lemma \ref{lem:zp}, $\Phi(I_n)\Phi(X) = \Phi(X)\Phi(I_n)$.
Therefore,
$$
RY_{11}=Y_{11}R,\quad RY_{12} = Y_{12}N,\quad   NY_{21} = Y_{21} R \quad\text{and}\quad NY_{22}=Y_{22}N.
$$
Without loss of generality, we can assume that
$N  = \sum_{j} d_j E_{j,j+1}$ with $d_j \in \{0,1\}$
is a direct sum of
upper triangular Jordan blocks of $N$ with  zero diagonals.
Write $Y_{12}  = \big[v_1\,| \cdots |\, v_{r-s}\big]$, where
$v_1, \dots, v_{r-s}$ are column vectors. Then
$$
\bigg[Rv_1\,|\, Rv_2\,|\, \cdots |\, Rv_{r-s}\bigg] = \bigg[0\,|\, d_1v_1\,| \cdots |\, d_{r-s-1}v_{r-s-1}\bigg].
$$
Thus, $v_1 = R^{-1} 0 = 0$ and
$v_j = d_{j-1} R^{-1}v_{j-1} = 0$ for $j = 2, \dots, r-s$.
Hence, $Y_{12}=0$.
Similarly, we can show that $Y_{21} = 0$.
Therefore,  $\Phi(X)$ assumes the form $Y_{11} \oplus Y_{22}$.
%Now, by \eqref{ia=ai} again, we have $RY_1 = Y_1R$.
Bringing back the similarity transformation, we can set up  the additive  maps
$\Phi_1: \bM_n(\IF) \rightarrow \bM_s(\IF)$ and $\Phi_0: \bM_n(\IF) \rightarrow \bM_{r-s}(\IF)$ such that
\begin{equation*} %\label{phi1-0}
 S^{-1}\Phi(X)S = R\Phi_1(X) \oplus \Phi_0(X).
 \end{equation*}
Clearly, $\Phi_1(I_n) = R^{-1}R = I_s$.
Moreover, $R\Phi_1(A)=\Phi_1(A)R$ for all $A$ in $\bM_n(\IF)$.
Suppose $A, B \in \bM_n(\IF)$ such that $AB=0_n$.
Let  $S^{-1}\Phi(A)S =  A_1 \oplus A_2$
and $S^{-1}\Phi(B)S = B_1 \oplus B_2$.
Since $\Phi(A)\Phi(B)=0_r$, we have $A_1 B_1 = 0_s$ and $A_2 B_2=0_{r-s}$.
It also follows
$$ R^{2}\Phi_1(A)\Phi_1(B) = R\Phi_1(A)R\Phi_1(B)=A_1B_1 = 0_s.$$
 Consequently,
both $\Phi_1, \Phi_0$ preserve zero products as well.
By Lemma \ref{lem:zp},
$\Phi_1$ is a unital  ring homomorphism,
and  $\Phi_0$ satisfies the said conclusion.

Next, we show that $R$   assumes the form $R_1 \otimes I_n$.
Assume that $\tau$ is a unital ring
homomorphism such that $\Phi_1(A) = \sum_{i,j=1}^n \tau(a_{ij}) \otimes E_{ij}$ for
$A = (a_{ij})\in \bM_n(\IF)$.
Let $P\in \bM_{nk}(\IF)$ be the permutation matrix
such that $P(X\otimes Y)P^t = Y\otimes X$ for  $(X,Y)
\in  \bM_k(\IF) \times \bM_n(\IF)$.
Then  $P\Phi_1(E_{ij})P^t = E_{ij} \otimes  \tau(1) = E_{ij} \otimes I_k$, and thus
  $R \Phi(E_{jj}) = \Phi(E_{jj}) R$, for all $i,j=1,\ldots, n$,
implies that $PR P^t = R_{11} \oplus \cdots \oplus R_{nn}$
with $R_{jj} \in \bM_k(\IF)$ for $j = 1, \dots, n$.
Furthermore, the fact  $R\Phi(E_{1j}) = \Phi(E_{1j}) R$
for $j \ne 1$  implies that  $R_{ii} = R_{jj}:= R_1$.

One can readily verify assertions  (1) -- (4).
\end{proof}

\begin{theorem} \label{main2}\label{unital-homo}
Let  $\Phi: \bM_n(\IF) \rightarrow \bM_r(\IF)$ be a linear map preserving zero products.
Then $\Phi$ assumes the form
\begin{equation*}%\label{homo-form-a}
A \mapsto S
\left(
  \begin{array}{cc}
    R_1 \otimes A & 0\\
    0 & \Phi_0(A) \\
  \end{array}
\right)S^{-1}
\end{equation*}
for some invertible matrices $S\in \bM_r(\IF)$ and $R_1\in\bM_k(\IF)$ with $r-nk\geq 0$,
and a zero product preserving linear map $\Phi_0: \bM_n(\IF)\to \bM_{r-nk}(\IF)$ into nilpotent matrices,
as stated in Theorem {\rm \ref{main}}.
Moreover, if $\Phi$ sends rank one  idempotents to idempotents
then $\Phi_0$  is the zero map.
\begin{comment}
 and $\Phi$ assumes the form
\begin{equation*}%\label{homo-form-a}
A \mapsto S
\left(
  \begin{array}{cc}
    R_1 \otimes A & 0\\
    0 & 0_{r-nk} \\
  \end{array}
\right)S^{-1}.
\end{equation*}
\end{comment}
\end{theorem}

%The following are some known results needed for the proof of Theorem \ref{main2}.

\begin{proof}%[Proof of Theorem \ref{main2}]
The first assertion follows directly from Theorem  \ref{main}.
Assume $\Phi$ sends rank one idempotents to idempotents.
Then $\Phi_0(E)=\Phi_0(E)^\nu =0$ for every rank one idempotent $E$ in $\bM_n(\IF)$,
where $\nu\geq1$ is the nil index of $\Phi_0(I_n)$.
%Since every idempotent is a sum of disjoint rank one idempotents by Lemma \ref{lem:known-%results}(e), $\Phi_0$ sends idempotents to zero.
Since every matrix is a linear combination of rank one idempotents by
Lemma \ref{P1P2}, we see that $\Phi_0$ is the zero map.
\end{proof}

\begin{example}\label{eg:Idemp}\label{eg:counter-symmetric}
%(a)
The condition that $\Phi$ sending rank one idempotents to idempotents in Theorem \ref{main2}  cannot be replaced by the
weaker one that $\Phi(I_n)$ being an idempotent.
  Consider the map $\Phi: \bM_n(\mathbb{F}) \to \bM_{r}(\mathbb{F})$ defined by
$$
\Phi(A)=A\oplus \left[(a_{11}-a_{22})N
%\begin{pmatrix}  1 & \mathrm{i} \\  \mathrm{i} & -1 \\  \end{pmatrix}
\right],
                                \quad\text{for all $A=(a_{ij})\in \bM_n(\mathbb{F})$,}
$$
with any field $\IF$,  any $r-1>n> 1$, and any
$N\in \bM_{r-n}(\IF)$ with $N^2=0$.
The injective linear map $\Phi$ preserves zero products,
and $\Phi(I_n)=I_n\oplus 0_{r-n}$ is an idempotent.
Although the range of $\Phi_0(A)= (a_{11}-a_{22})N
%\begin{pmatrix} 1 & \mathrm{i} \\ \mathrm{i} & -1 \\  \end{pmatrix}
$
has trivial multiplication, $\Phi_0\neq 0$.

%(b)
%Consider the  linear map $\Phi: \bM_2(\IR)\to \bM_2(\IR)$ defined by
%$$
%\begin{pmatrix}
%a & b\cr
%c & d\cr
%\end{pmatrix}
%\mapsto
%\begin{pmatrix}
%0 & b-c\cr
%0 & 0\cr
%\end{pmatrix}.
%$$
%It is clear that $\Phi=\Phi_0$ preserves zero products. The range of $\Phi_0$ has trivial multiplication, while $\Phi_0$ is not the %zero map.
%
%(c) The map $\Phi_0$ in Theorem \ref{main2}(c) might not assume the form \eqref{homo-form-a} or send real symmetric matrices to zero.
%The one given in part (a) serves as a counter example.
\end{example}

We can use the above results and techniques
to study linear    double zero product preservers, i.e., those maps $\Phi$ between matrices such that
$$
\Phi(A)\Phi(B) = \Phi(B)\Phi(A) = 0  \quad\text{whenever}  AB = BA = 0.
$$
To this end, we also need the following result in {\cite{LWWT-Idem}}.

\begin{theorem}\label{thm:J-homo}
Let $\Phi: \bM_n(\IF) \rightarrow \bM_r(\IF)$ be a linear map.
Assume that $\Phi$ sends disjoint rank one idempotents to
disjoint idempotents.
\begin{enumerate}[\rm (a)]
  \item Suppose  $\bM_n(\IF)\neq \bM_2(\mathbb{Z}_2)$. Then there are nonnegative integers $p, q$ with $s = n(p+q) \le r$,
and an invertible matrix $S$ in $\bM_r(\IF)$ such that
$\Phi$ assumes the form
\begin{align}\label{oldform-2}
A \mapsto
S\left(
  \begin{array}{ccc}
    I_{p}\otimes A&   &     \\
      &   I_{q}\otimes A^\T &     \\
      &   & 0_{r-s} \\
  \end{array}
\right)S^{-1} \quad\text{for all $A\in \bM_n(\IF)$.}
\end{align}
  \item Suppose  $\bM_n(\IF)= \bM_2(\mathbb{Z}_2)$. Then
  there are  nonnegative integers $k_1, k_2$ with $k_1 +k_2 \leq r$ and an invertible matrix $S$ in $\bM_r(\IZ_2)$ such
    that $\Phi$ assumes the form
\begin{align*}%\label{eq:M2Z2}
\begin{pmatrix}
  a & b \\
  c & d \\
\end{pmatrix}
  \mapsto
S\left[\begin{pmatrix}
  aI_{k_1}+bB_{11}+cC_{11} & bB_{12} + cC_{12} \\
  bB_{21} + cC_{21} & dI_{k_2}+bB_{22}+cC_{22}  \\
\end{pmatrix}\oplus 0_{r-k_1-k_2}\right]S^{-1},
\end{align*}
where $B_{ij}, C_{ij}$ are rectangular $k_i\times k_j$ matrices for $i,j=1,2$  satisfying that
\begin{align*}
 \begin{pmatrix}
   B_{11} & B_{12} \\
   B_{21} & B_{22}  \\
 \end{pmatrix}^2   =
 \begin{pmatrix}
   B_{11} & 0  \\
   0 & B_{22}  \\
 \end{pmatrix}
 \quad\text{and}\quad
 \begin{pmatrix}
   C_{11} & C_{12}   \\
   C_{21} & C_{22}   \\
 \end{pmatrix}^2  =
 \begin{pmatrix}
   C_{11} & 0  \\
   0 & C_{22}   \\
 \end{pmatrix}.
\end{align*}
\end{enumerate}
Conversely, if   $\Phi$ assumes the stated form in either case, then $\Phi$ sends disjoint rank one idempotents to disjoint idempotents.
\end{theorem}

\begin{theorem}\label{DZP-main}
Let  $\Phi: \bM_n(\IF) \rightarrow \bM_r(\IF)$ be a  linear map
preserving double zero products, i.e.,
$$
\Phi(A)\Phi(B) = \Phi(B)\Phi(A) = 0  \quad\text{whenever}\quad A, B \in \bM_n(\IF) \ \text{satisfies}\ AB = BA = 0.
%AB = BA  = 0 \text{ in }  \bM_n \quad\implies\quad \Phi(A)\Phi(B) = \Phi(B)\Phi(A)=0 \text{ in } \bM_r.
$$
Then
there exist nonnegative integers $p, q$
such that $\Phi(I_n)^r$ has rank $s = n(p +q)$, and invertible matrices
% $S$ in $\bM_r(\IF)$,
$R_1$ in $\bM_{p}(\IF)$ and $R_2$ in $\bM_{q}(\IF)$ such that $\Phi$ assumes the form
\begin{equation*}% \label{DZP-main-form}
A \mapsto  S
\begin{pmatrix}
R_1 \otimes A & 0 & 0 \cr
0 & R_2 \otimes A^{\verythinspace\mathrm{t}} & 0 \cr
0 & 0 & \Phi_0(A)\cr\end{pmatrix}S^{-1}.
\end{equation*}
Here, $\Phi_0: \bM_n(\IF)\to \bM_{r-s}(\IF)$ is a double zero product preserving linear map satisfying that
$\Phi_0(I_n)$ commutes with $\Phi_0(A)$ for all $A\in\bM_n(\IF)$ and
$\Phi_0(A)$ is a nilpotent matrix
for every diagonalizable  $A$ in $\bM_n(\IF)$.

When $\Phi(I_n)$ is invertible,  $\Phi$ assumes the form
$$
A\mapsto S[(R_1\otimes A)\oplus (R_2\otimes A^\T)]S^{-1};
$$
when $\Phi$ sends rank one idempotents to idempotents,
 $\Phi$ assumes the form
$$
A\mapsto S[(R_1\otimes A)\oplus (R_2\otimes A^\T)\oplus 0]S^{-1}.
$$
\end{theorem}
\begin{proof}
Observe that for any  idempotent $E$ in $\bM_n(\IF)$, we have
$$
E(I_n-E) =(I_n-E)E =0.
$$
Thus
$$
\Phi(E)(\Phi(I_n)-\Phi(E)) = (\Phi(I_n)-\Phi(E))\Phi(E)=0.
$$
This gives
\begin{align}\label{eq:PI_n}
\Phi(E)\Phi(I_n) = \Phi(E)^2 = \Phi(I_n)\Phi(E).
\end{align}
Since every $A$ in $\bM_n(\IF)$ is a linear combination of  idempotents by Lemma \ref{P1P2},
\begin{align}\label{eq:PI_n-central}
\Phi(I_n)\Phi(A)  = \Phi(A)\Phi(I_n) \quad
\hbox{ for all } A \in \bM_n(\IF).
\end{align}

As argued in the proof of Theorem \ref{main}, we write
$$
S^{-1}\Phi(\cdot)S=R\Phi_1\oplus\Phi_0
$$
in which $R\in \bM_s(\IF)$ is invertible and
 $\Phi_1$ is a unital linear map from $\bM_n(\IF)$ into $\bM_s(\IF)$ such that $R\Phi_1(A)=\Phi_1(A)R$ for all $A\in \bM_n(\IF)$,
 while $\Phi_0: \bM_n(\IF)\to \bM_{r-s}(\IF)$ is linear.  Both $\Phi_1, \Phi_0$ preserve double zero products.
 In particular, it follows
 from \eqref{eq:PI_n-central} that  $\Phi_1(I_n)$ and $\Phi_0(I_n)$ commute with $\Phi_1(A)$ and $\Phi_0(A)$ for all $A\in\bM(\IF)$, respectively.

Since $\Phi_0(I_n)^\nu=0$ for some positive integer $\nu$,
it follows from \eqref{eq:PI_n} that
$\Phi_0(E)^{\nu +1} = \Phi_0(I_n)^{\nu}\Phi_0(E)=0$
for all idempotents $E$ in $\bM_n(\IF)$.
Let $A\in \bM_n(\IF)$ be diagonalizable, namely, $A=\sum_j a_j E_j$ is a linear sum of disjoint idempotents.
It follows from the double zero product preserving property of the linear map $\Phi_0$ that
$$
\Phi_0(A)^{\nu+1}= (\sum_j a_j \Phi_0(E_j))^{\nu+1} = \sum_j a_j^{\nu+1}\Phi_0(E_j)^{\nu+1} = 0.
$$

On the other hand, we see from \eqref{eq:PI_n} that  $\Phi_1$ preserves idempotents, and indeed,
$\Phi_1$  sends disjoint idempotents to disjoint idempotents.
Suppose that $\bM_n(\IF)= \bM_2(\IZ_2)$.
By Theorem \ref{thm:J-homo}(b),
$\Phi_1$ assumes the form
\begin{align*}%\label{eq:M2Z2}
\begin{pmatrix}
  a & b \\
  c & d \\
\end{pmatrix}
  \mapsto
\begin{pmatrix}
  aI_{k_1}+bB_{11}+cC_{11} & bB_{12} + cC_{12} \\
  bB_{21} + cC_{21} & dI_{k_2}+bB_{22}+cC_{22}  \\
\end{pmatrix}.
\end{align*}
Since $E_{12}^2=E_{21}^2=0$ and $\Phi_1$ preserves square zero matrices,
%we see that
\begin{gather*}
\Phi_1(E_{12})=
\begin{pmatrix}
  0_{k_1} & B_{12} \\
  B_{21} & 0_{k_2}  \\
\end{pmatrix}, \quad
\Phi_1(E_{21})=
\begin{pmatrix}
  0_{k_1} & C_{12} \\
  C_{21} & 0_{k_2}  \\
\end{pmatrix}, \\
B_{12}B_{21} = C_{12}C_{21}= 0_{k_1} \quad\text{and}\quad
B_{21}B_{12} = C_{21}C_{12}= 0_{k_2}.
\end{gather*}
Since
$(E_{11}+E_{12}+E_{21}+E_{22})^2=0$, we have
$$
\begin{pmatrix}
  I_{k_1} & B_{12} + C_{12} \\
  B_{21} + C_{21} & I_{k_2}  \\
\end{pmatrix}^2=0,
$$
and thus
\begin{align*}
I_{k_1} &= (B_{12} + C_{12})(B_{21} + C_{21}), \\
I_{k_2} &= (B_{21} + C_{21})(B_{12} + C_{12}).
\end{align*}
It follows $k_1=k_2:=k$ and $(B_{12} + C_{12})= (B_{21} + C_{21})^{-1}$.

Define $\Psi: \bM_2(\IZ_2)\to \bM_s(\IZ_2)$ by
$$
\Psi(\cdot) = \big(I_k\oplus (B_{12} + C_{12})\big)\Phi_1(\cdot)\big(I_k\oplus (B_{12} + C_{12})^{-1}\big).
$$
Then
\begin{alignat*}{2}
  \Psi(E_{11})  &=
\begin{pmatrix}
  I_{k} & 0\\
  0 & 0  \\
\end{pmatrix}, && \quad
  \Psi(E_{22}) =
\begin{pmatrix}
  0 & 0\\
  0 & I_{k}  \\
\end{pmatrix}, \\
  \Psi(E_{12}) &=
\begin{pmatrix}
  0 & B'_{12}\\
  B'_{21} & 0  \\
\end{pmatrix}, && \quad
  \Psi(E_{21}) =
\begin{pmatrix}
  0 & C'_{12}\\
  C'_{21} & 0  \\
\end{pmatrix}
\end{alignat*}
for some $B'_{12}, B'_{21}, C'_{12}, C'_{21}\in \bM_k(\IZ_2)$ such that
\begin{gather*}
  \Psi(E_{12} + E_{21}) =
\begin{pmatrix}
 0 & B'_{12} + C'_{12} \\
  B'_{21} + C'_{21} & 0  \\
\end{pmatrix}
=
\begin{pmatrix}
 0 & I_{k} \\
  I_k & 0  \\
\end{pmatrix}, \quad\text{and} \\
B'_{12}B'_{21} = C'_{12}C'_{21}=
B'_{21}B'_{12} = C'_{21}C'_{12}= 0_{k}.
\end{gather*}
By Lemma \ref{nil}, there is an invertible $U\in \bM_k(\IZ_2)$ such that
$$
 UB'_{12}U^{-1}=\begin{pmatrix} R_{12} & 0 \cr
0 & N_{12} \cr\end{pmatrix}
$$
for an invertible $R_{12}\in \bM_p(\IZ_2)$ and a nilpotent $N_{12}\in \bM_q(\IZ_2)$ with $p+q=k$.
Since $B'_{12}B'_{21}=B'_{21}B'_{12}=0_k$, we see that
$UB'_{21}U^{-1} = 0_{p} \oplus T$ with $T \in \bM_q(\IZ_2)$
satisfying $TN_{12} = N_{12}T = 0_q$.
Since $B'_{12} + C'_{12} = B'_{21} + C'_{21} = I_k$,
we have
$$
UC'_{12}U^{-1} = (I_p - R_{12}) \oplus (I_q - N_{12}) \quad \hbox{
and } \quad  UC'_{21}U^{-1} = I_p \oplus (I_q - T).
$$
Since
% $R_{12}\in \bM_p(\IF)$ and
$(I_q-N_{12}) \in \bM_q(\IZ_2)$ is invertible, and
$$
0_k = C'_{12}C'_{21} = (UC'_{12}U^{-1})(U C'_{21} U^{-1})=
(I_p-R_{12})I_p \oplus (I_q-N_{12})(I_q-T),$$
we see that $R_{12} = I_p$ and $T=I_q$, and thus $N_{12} = 0_q$.
%Replacing $\Phi$ with the map $A\mapsto (U\oplus U\oplus I_{r-2k})\Phi(A)(U^{-1}\oplus U^{-1}\oplus I_{r-2k})$, we
%can further assume that

Let $\Psi_1(\cdot) = (U\oplus U)\Psi(\cdot)(U\oplus U)^{-1}$.  Then,
\begin{align*}
\Psi_1(E_{11})
&=
\begin{pmatrix}
I_{p} & 0 & 0_p  & 0\cr
0  & I_{q} & 0 & 0_q  \cr
0_p  & 0  & 0_{p} & 0 \cr
0 & 0_q  & 0 & 0_{q} \cr\end{pmatrix}, \hspace{5mm}
\Psi_1(E_{22})
 =
\begin{pmatrix}
0_{p} & 0 & 0_{p} & 0\cr
0  & 0_{q} & 0 & 0_{q} \cr
0_{p} & 0_q & I_{p} & 0 \cr
0 & 0_{q} &0  & I_{q} \cr\end{pmatrix}, \\
\Psi_1(E_{12})
&=\begin{pmatrix} 0_{p} & 0 & I_{p} & 0\cr
0 & 0_{q} & 0 & 0_{q} \cr
0_{p} & 0 & 0_{p} & 0 \cr
0 & I_{q} & 0 & 0_{q} \cr\end{pmatrix}, \hspace{5mm}
\Psi_1(E_{21})
 =\begin{pmatrix} 0_{p} & 0 & 0_{p} & 0\cr
0 & 0_{q} & 0 & I_{q} \cr
I_{p} & 0 & 0_{p} & 0 \cr
0 & 0_{q} & 0 & 0_{q} \cr\end{pmatrix}.
\end{align*}
Consequently, $\Psi_1$ assumes the form
$$
\begin{pmatrix}
  a & b \\
  c & d
\end{pmatrix}
\mapsto
\begin{pmatrix}
aI_{p} & 0 & bI_{p} & 0\cr
0 & aI_{q} & 0 & cI_{q} \cr
cI_{p} & 0 & dI_{p} & 0 \cr
0 & bI_{q} & 0 & dI_{q} \cr\end{pmatrix}.
$$
After a permutation similarity,
%we conclude again that
$$
 V\Psi_1(A)V^{-1} = \begin{pmatrix}
 I_{p} \otimes A &0 \cr
 0 &  I_{q} \otimes A^\T \end{pmatrix}.
$$
Let $W=\big(I_k\oplus (B_{12} + C_{12})\big)^{-1}(U\oplus U)^{-1}V^{-1}\in \bM_{n(p+q)}(\IZ_2)$.  Then
 $\Phi_1$ assumes the form
$$
\Phi_1(A) = W\begin{pmatrix}
 I_{p} \otimes A &0 \cr
 0 &  I_{q} \otimes A^\T \end{pmatrix}W^{-1}.
 $$

Since $R\Phi_1(A)=\Phi_1(A)R$, we have
$$
(W^{-1}RW)\begin{pmatrix}
 I_{p} \otimes A &0 \cr
 0 &  I_{q} \otimes A^\T \end{pmatrix}
 =
\begin{pmatrix}
 I_{p} \otimes A &0 \cr
 0 &  I_{q} \otimes A^\T \end{pmatrix}(W^{-1}RW)\quad\text{for all $A\in \bM_2(\IZ_2)$.}
$$
Hence,
$$
W^{-1}RW = \begin{pmatrix}
 R_1 \otimes I_n &0 \cr
 0 &  R_2 \otimes I_{n} \end{pmatrix}
$$
for some invertible matrices $R_1\in \bM_p(\IZ_2)$ and $R_2\in \bM_q(\IZ_2)$.
Replacing $S$ by $S(W\oplus I_{r-n(p+q)})$, we see that $\Phi$ assumes the asserted form.

If $\bM_n(\IF)\neq \bM_2(\IZ_2)$ then,
by Theorem \ref{thm:J-homo}(a), we can assume that $\Phi_1$ also carries
the form  $\Phi_1(A)= W[(I_{p}\otimes A)\oplus (I_{q}\otimes A^\T)]W^{-1}$ for some invertible matrix $W\in \bM_{n(p+q)}(\IF)$.
 A similar argument derives that
 $W^{-1}RW=(R_1\otimes I_n)\oplus (R_2\otimes I_n)$,  and thus $\Phi$ assumes the stated form by replacing
 $S$ with $S(W\oplus I_{r-n(p+q)})$.

Finally, if $\Phi(I_n)=R$ is invertible, then $\Phi(A)=S[(R_1\otimes A)\oplus (R_2\otimes A^\T)]S^{-1}$ for all $A\in \bM_n(\IF)$,
as asserted.  If $\Phi$ sends rank one idempotents  to idempotents, then  $\Phi_0$ sends any rank one idempotent
to a nilpotent idempotent, and thus a zero matrix.  Since every matrix is a linear combination of rank one idempotent by Lemma \ref{P1P2},
 $\Phi_0$ is a zero map.
\end{proof}

\begin{remark}\label{rem:CxDZP}
In Theorem \ref{DZP-main},
more can be said about $\Phi_0$ if $\IF=\IR$ or $\IC$, namely, one can conclude that
$$
\Phi_0(X)^{\nu+1}=0\quad\text{for all $X\in \bM_n(\IF)$.}
$$
To see this, first note that we have $\Phi_0(A)^{\nu+1}=0$ for every diagonalizable $A$.
Now, for a given $A = (a_{ij}) \in \bM_n(\IF)$, we can write $A=B+C$ with $B = \diag(L, 2L, \dots, nL)$ for sufficiently large $L>0$
so that the $n$ Gershgorin disks of $C=A-B$
$$D_j = \{ z: |z-a_{jj}+ jL| \le \sum_{k\ne j} |a_{jk}|\}, \quad j = 1, \dots, n$$
are disjoint. For instance, this will hold if $L>2\sum_{k=1}^n |a_{jk}|$ for all $j = 1, \dots, n$.
In such a case the centers of any two disks $D_j$ and $D_\ell$ will satisfy 
$$
|(a_{jj}-jL)-(a_{\ell\ell}-\ell L)|\geq |(j-\ell)L|-|a_{jj}|-|a_{\ell\ell}|>  \sum_{k\ne j} |a_{jk}|+\sum_{k\ne \ell} |a_{\ell k}| 
$$ 
which are the sum of the radii of the two disks. So the two disks are disjoint.
Therefore, there is one eigenvalue in every disk $D_j$ for $j = 1, \dots, n$.
If $\IF = \IR$, then the complex eigenvalues of $C$ will occur in conjugate
pairs. Since the disjoint circular disks $D_1, \dots, D_n$ have centers
$a_{11} - L, a_{22} - 2L, \dots, a_{nn}-nL$ on the real line,
we see that $C$ has $n$ distinct real eigenvalues.
Thus, in both the real and complex cases, $B, C$ are diagonalizable.
Consequently,
$\Phi(B)^{\nu+1} = \Phi(C)^{\nu+1} = 0$.
Moreover, for $t = 2, \dots, \nu+1$, one can use the above
argument to show that $tB + C$ has $n$ disjoint Gershgorin disks,
and hence has $n$ distinct eigenvalues in $\IF$.
Thus,
$$ 0 = \Phi(tB+C)^{\nu+1} = (t\Phi(B) + \Phi(C))^{\nu+1}
= \sum_{k=0}^{\nu+1} t^k G_k(\Phi(B), \Phi(C)),$$
where $G_k(\Phi(B),\Phi(C))$ is the sum of ${\nu+1\choose k}$
matrices, and  each summand is a product of
$\nu+1$ matrices with $k$ of them equal to $\Phi(B)$ and the rest
equal to $\Phi(C)$.
In particular, $G_{\nu+1}(\Phi(B),\Phi(C)) = \Phi(B)^{\nu+1} = 0$ and
$G_0(\Phi(B), \Phi(C)) = \Phi(C)^{\nu +1}=0$.
So,
$$0 = \sum_{k=1}^{\nu} t^k G_k(\Phi(B),\Phi(C)), \quad t = 0, 2, \dots, \nu+1.$$
Now, the matrix-coefficient  polynomial in $t$ has degree at most $\nu$,
and has $\nu+1$ zeros. So,
$G_k(\Phi(B),\Phi(C)) = 0$ for all $k = 1, \dots, \nu$, and thus
$\Phi(A)^{\nu+1} = \Phi(B+C)^{\nu+1} = 0$.

\medskip
By the above discussion, we see that in Theorem \ref{DZP-main},
even if no extra information on $\IF$ is given,
one can conclude that
$\Phi_0(A)^{\nu+1} = 0$ whenever $A = B+C$ such that
$B, C$, $t_1 B + C, \dots, t_{\nu} B+C$ are diagonalizable for
$\nu$ distinct nonzero elements $t_1,\dots, t_{\nu} \in \IF$.
\iffalse
    when the underlying field is the complex $\IC$.
Let $\Phi_0: \bM_n(\IC)\to\bM_r(\IC)$ be a complex linear map preserving double zero products such that
$\Phi_0(I_n)^\nu=0$ for some nonnegative integer $\nu$.  Then
$$
\Phi_0(X)^{\nu+1}=0\quad\text{for all $X\in \bM_n(\IC)$.}
$$
Indeed, it follows from Theorem \ref{DZP-main}  that $\Phi_0(A)^{\nu+1}=0$ for every diagonalizable,
especially hermitian, matrix  $A$ in $\bM_n(\IC)$.
  For any hermitian $A, B\in \bM_n(\IC)$ and any $t\in \IR$, the matrix $A+tB$ is also hermitian, and thus
  $\Phi_0(A+tB)^{\nu+1}=0$.  This forces the sum of the noncommutative
   products of $\alpha$ many $\Phi_0(A)$ and $\nu-\alpha$ many $\Phi_0(B)$ is zero for
  $\alpha=0,1,\ldots, \nu+1$.
  Consequently,  for the complex matrix $A+\mathbf{i} B$,
we have
  $\Phi_0(A+\mathbf{i} B)^{\nu+1} = 0$.
\fi
\end{remark}

In  the proof of Theorem \ref{DZP-main},
one can deduce from
\eqref{eq:PI_n} that the
 unital double zero product linear preserver $\Phi_1$
sends  idempotents to  idempotents.
When the underlying field
$\IF$ does \emph{not} have characteristic $2$,
a routine argument will  show that $\Phi_1$ is a Jordan homomorphism,
as well as a direct sum of a homomorphism and
an anti-homomorphism, and Theorem \ref{thm:alg-homo} applies.
However, the following example  tells us
that a  linear Jordan homomorphism between matrices over a field of characteristic $2$
does not necessarily  assume the form \eqref{oldform-2}.
Therefore, Theorem \ref{thm:J-homo}, together with a separate treatment to the case when $\bM_n(\IF)=\bM_2(\IZ_2)$,
is necessary in proving Theorem \ref{DZP-main}.

\begin{example}\label{eg:ch2-id-not}
Let $\IF$ be a field of characteristic $2$ and $\Phi: \bM_n(\IF)\to \bM_r(\IF)$ defined by
$$
A \mapsto \operatorname{trace}(A)I_r.
$$
%for any nonzero idempotent $E\in \bM_r(\IF)$.
For any $A,B\in \bM_n(\IF)$, observe that both
\begin{gather*}
\operatorname{trace}(AB + BA) = 2\operatorname{trace}(AB)=0, \quad\text{and} \\
\operatorname{trace}(A)\operatorname{trace}(B) + \operatorname{trace}(B)\operatorname{trace}(A)
= 2\operatorname{trace}(A)\operatorname{trace}(B)=0.
\end{gather*}
Hence, $\Phi$ is a  linear
Jordan homomorphism.
Since the trace of an idempotent in $\bM_n(\IF)$ is either zero or one (modulo $2$),
 $\Phi$ also sends idempotents to idempotents.
  But $\Phi$ does not assume the form \eqref{oldform-2}.

Note that
$\Phi$  does not send disjoint idempotents to disjoint idempotents,
and thus Theorem \ref{thm:J-homo} does not apply.
\end{example}

\section{Zero product preserving  maps into nilpotents}\label{s:3.3}

By Theorem \ref{main}, every zero product preserving additive map $\Phi: \bM_n(\mathbb{F}) \rightarrow \bM_r(\mathbb{F})$
assumes the form
\begin{align}\label{eq:stand-form-for-zp}
A \mapsto S(R\Phi_1(A) \oplus \Phi_0(A))S^{-1}= S(\Phi_1(A)R \oplus \Phi_0(A))S^{-1},
\end{align}
where $R, S$ are invertible matrices,   $\Phi_1: \bM_n(\mathbb{F})\to \bM_{nk}(\mathbb{F})$ is a unital ring homomorphism and
$\Phi_0 : \bM_n(\mathbb{F})\to \bM_{r-nk}(\mathbb{F})$ is a zero product preserving additive map sending matrices to nilpotent matrices.
With the discussion in Section \ref{s:0p}, we have a good understanding of
$\Phi_1$. In this section,
we focus on $\Phi_0$.
By Theorem \ref{main}, if $\Phi_0(I_n) = 0$,  then
$\Phi_0(\bM_n(\mathbb{F}))$ has trivial multiplication.

\begin{comment}
The following example shows that for a zero product preserving linear map
$\Phi: \bM_n(\IF)\to \bM_r(\IF)$,
if $\Phi(I_n)$ is only assumed to be nilpotent, the range of $\Phi$ might have
nontrivial multiplication.

\begin{example}\label{eg:counter-trivial-mult}
%Let the underlying field ${\mathbb F}$ be arbitrary.
The linear map $\Phi: \bM_n(\IF)\to \bM_{kn}(\IF)$ defined by
$$
A\mapsto \begin{pmatrix} 0_n & A & 0_n & \ldots & 0_n  \cr
0_n & 0_n & A & \ldots &  0_n \cr
\vdots & \vdots & \vdots & \ddots & \vdots \cr
0_n & 0_n & 0_n & \ldots &  A \cr
0_n & 0_n & 0_n & \ldots &  0_n \cr\end{pmatrix}
$$
preserves zero products.  The matrix $\Phi(I_n)$ is nilpotent such that $\Phi(I_n)^{k-1}\neq 0$ and $\Phi(I_n)^k=0$.
In particular, the range of $\Phi$ does not have trivial multiplication if $k> 2$.
\end{example}
\end{comment}

In \cite[Theorem 5.2]{BS-zp}, it is shown that every zero product preserving additive map
$\Phi: \bM_n(\mathbf{D})\to \bM_n(\mathbf{D})$ of matrices over a division ring
$\mathbf{D}$  either has a range with trivial multiplication, or
$\Phi(\cdot)=C\Psi(\cdot)=\Psi(\cdot)C$ for a ring endomorphism $\Psi$ and a  matrix $C$.
However, such conclusion may not hold for maps between matrices of different sizes.
For instance,  we can have the following example based on
{\cite[p.~310]{ORS01}} and {\cite[Example 2.5]{CKLW03}}.

\begin{example}\label{eg:kk2}
%Let the underlying field ${\mathbb F}$ be arbitrary.
 Consider $\Phi:\bM_n(\mathbb{F})\to \bM_{r}(\mathbb{F})$
with $r\geq n+2$ and $n\neq 1$ defined by
$$
\begin{pmatrix}
a_{ij}
\end{pmatrix}
\mapsto
\begin{pmatrix}
    0      &  a_{11}  & \cdots &  a_{1n} & 0      & 0  & \cdots & 0\\
    0      &   0      & \cdots &   0     & a_{1n} & 0 & \cdots & 0\\
    \vdots & \vdots   & \ddots & \vdots  & \vdots & \vdots & \ddots &0  \cr
    0      &   0      & \cdots &    0    &  a_{nn}& 0 & \cdots & 0\\
    0      &   0      & \cdots &    0    &  0& 0 & \cdots & 0
    \end{pmatrix}.
$$
The linear map $\Phi$  preserves zero products.
Note that $\Phi(I_n)^{2}=0$, and thus any product of three elements in $\Phi(\bM_n(\IF))$ is zero.  Since
$\Phi(E)^2\neq 0$ with $E=E_{11}+ E_{1n}$,
the image of $\Phi$ has a
nontrivial multiplication.

We claim that $\Phi$ cannot be written as the form $C\Psi$
for any   $C$ in $\bM_{r}({\mathbb F})$ and any
homomorphism $\Psi:\bM_n({\mathbb F})\to \bM_{r}({\mathbb F})$.
Assume on the contrary that $\Phi=C\Psi$.  Then we get a contradiction since
\begin{align*}
\Phi(E)^2&=\Phi(E)C\Psi(E) =\Phi(E)C\Psi(E_{11}E)
    =\Phi(E)C\Psi(E_{11})\Psi(E)\\
    &    =\Phi(E)\Phi(E_{11})\Psi(E)
    =0\Psi(E)=0.
\end{align*}
\end{example}

We have the following.

\begin{proposition}\label{prop:goodhome-n=r}
Suppose that  $r \leq n+1$ and $n\neq 1$.
Let  $\Phi: \bM_n(\mathbb{F})\to \bM_r(\mathbb{F})$ be an additive zero product preserver
such that $\Phi(I_n)$ is a nilpotent matrix.
Then $\Phi(X)\Phi(Y) = 0$ for any $X, Y \in \bM_n(\IF)$.
\end{proposition}

By the above proposition and Theorem \ref{main}, we have the following
counterpart of \cite[Theorem 5.2]{BS-zp}.

\begin{corollary}
Suppose that  $r \leq n+1$ and $n\neq 1$.
Let  $\Phi: \bM_n(\mathbb{F})\to \bM_r(\mathbb{F})$ be an additive zero product preserver.
  \begin{enumerate}[\rm (a)]
    \item If $\Phi(I_n)$ is not a nilpotent, then $r=n$ or $r=n+1$, and
    $\Phi$  assumes the form
     \begin{align}\label{eq:goodform-r<n+2}
  A\mapsto \alpha S(A_\tau \oplus 0_{r-n}) S^{-1}
  \end{align}
    for some nonzero scalar $\alpha$, an
    invertible matrix $S$ in $\bM_r(\mathbb{F})$, and a unital ring endomorphism $\tau$ of $\IF$.
    Here, $A_\tau = (\tau(a_{ij}))$ if $A=(a_{ij})$.
    \item If $\Phi(I_n)$ is a nilpotent,  then the range of $\Phi$ always has trivial multiplication.
    In the case when the underlying field $\mathbb{F}$ is an infinite
field of characteristic $2$, we assume in addition that $\Phi$ is $\mathbb{F}$-linear.
  \end{enumerate}
\end{corollary}

To prove Proposition \ref{prop:goodhome-n=r}, we need the following lemma.
It provides us a sufficient and necessary condition for
$\Phi_0(\bM_n(\mathbb{F}))$ having trivial multiplication.

\begin{lemma}
\label{lem:idempotents2square0}
Let  $\Phi: \bM_n(\mathbb{F})\to \bM_r(\mathbb{F})$ be an additive zero product preserver.  When $\mathbb{F}$ is an infinite
field of characteristic $2$, we assume in addition that $\Phi$ is $\mathbb{F}$-linear.
 The range of $\Phi$ has trivial multiplication exactly when
  $\Phi$ sends every  scalar multiple of a rank one idempotent to a square zero element.
\end{lemma}
\begin{proof}
We verify the sufficiency only.
By Lemma \ref{P1P2}, for every $X,Y$ in $\bM_n(\IF)$ we can write
their product as a linear combination of idempotents,
$XY=\sum_j \beta_j E_j$, say.
In the case when $2$ is invertible in $\mathbb{F}$, we see that each scalar
$\beta= \left(\dfrac{\beta +1}{2}\right)^2 - \left(\dfrac{\beta -1}{2}\right)^2$.
In the case when $\mathbb{F}$ is a finite field of characteristic $2$,
the map $\beta\mapsto \beta^2$ is  injective, and thus
bijective, from $\mathbb{F}$ onto $\mathbb{F}$.  Thus in both cases we can
assume that $\beta_k=\alpha_k^2-\gamma_k^2$ for some $\alpha_k, \gamma_k$ in $\mathbb{F}$ for all $k$.

If $\Phi$ is assumed additive and $\mathbb{F}$ is not an infinite field of
characteristic $2$, then with \eqref{iab=ab} we have
\begin{align*}
\Phi(X)\Phi(Y) &=\Phi(I_n)\Phi(XY) = \sum_j \Phi(I_n)\Phi((\alpha_j^2 -\gamma_j^2) E_j)\\
&= \sum_j \Phi(I_n)\Phi((\alpha_j E_j)^2)-\sum_j  \Phi(I_n)\Phi((\gamma_j E_j)^2)\\
&= \sum_j \Phi(\alpha_j E_j)^2 -\sum_j \Phi(\gamma_j E_j)^2=0.
\end{align*}
For the exceptional case that $\IF$ is an infinite field of characteristic $2$, with the linearity of $\Phi$
it follows from \eqref{iab=ab} that
\begin{align*}
\Phi(X)\Phi(Y)&=\Phi(I_n)\Phi(XY)=\Phi(I_n)\Phi(\sum_j \beta_j E_j)\\
&= \sum_j \beta_j\Phi(I_n)\Phi(E_j)= \sum_j \beta_j\Phi(E_j)^2=0.
\end{align*}
\vskip -.4in \end{proof}

\medskip
\begin{proof}[Proof of Proposition \ref{prop:goodhome-n=r}]
\iffalse
(a)
Write $S^{-1}\Phi(I_n)S = R\oplus N$ for an invertible matrix $S$ in $\bM_r(\IF)$,
 an invertible matrix $R$ in $\bM_s(\IF)$,
and a nilpotent matrix $N$ by Lemma \ref{nil}. By Theorem \ref{main} and the
assumption on $r$, we see that
\iffalse
Since $\Phi(I_n)$ is not a nilpotent, by Theorems \ref{main} and \ref{thm:ring-hom},
it is  necessary that $s=n$, and $r=n$ or $r=n+1$, either of which forcing $N=0$.
As in Theorem \ref{main},
\fi
there is  a unital ring homomorphism $\Phi_1: \bM_n(\IF)\to \bM_n(\IF)$
such that
$$
S^{-1}\Phi(A)S= R\Phi_1(A) \oplus 0_{r-n} =\Phi_1(A)R \oplus 0_{r-n}\quad\text{for all $A\in \bM_n(\IF)$.}
$$
By Theorem \ref{thm:ring-hom}, there is a unital ring endomorphism $\tau$
of $\IF$ such that
 $$
 \Phi_1(A) = \sum_{i,j=1}^{n} \tau(a_{ij})E_{ij}=(\tau(a_{ij}))
 = A_\tau, \quad \text{for all $A=(a_{ij})\in \bM_n(\IF)$.}
 $$
Since $R$ commutes with all $A_\tau$, in particular with all $E_{ij}$ since
$\tau(1)=1$, the invertible matrix $R=\alpha I_n$ for some nonzero scalar $\alpha$.
  Consequently, $\Phi$ assumes the asserted form \eqref{eq:goodform-r<n+2}.

(b)
\fi
Let $\Phi(I_n)$ be a nilpotent.  Suppose on contrary that $\Phi(\bM_n(\IF))$ does not have trivial multiplication.
By Lemma \ref{lem:idempotents2square0},
$\Phi(\alpha E)^2\neq 0$ for a rank one idempotent $E$ in $\bM_n(\IF)$ and $\alpha\neq 0$ in $\mathbb{F}$.
%In case when $\Phi$ is linear, we can assume $\alpha=1$.
Let $\{e_1,\ldots, e_n\}$ be a  basis of $\mathbb{F}^n$ consisting of
eigenvectors of $E$ such that $Ee_1=e_1$ and $Ee_j=0$ for $j=2,\ldots, n$.
In this setting, we can assume  $E=  E_{11}$.

Because $\Phi$ preserves zero products,
\begin{align}\label{eq:ijkl0}
\Phi(\alpha E_{ij})\Phi(\alpha E_{kl})=0, \quad \text{whenever $j\neq k$, and}\ i,j, k,l = 1, \ldots, n.
\end{align}
Observe also that
$$
(\alpha E_{11} + \alpha E_{1j})(\alpha E_{11} - \alpha E_{j1}) =0
$$
implies
\begin{align}\label{eq:1jj1}
\Phi(\alpha E_{1j})\Phi(\alpha E_{j1}) = \Phi(\alpha E_{11})^2 \neq0, \qquad j=1,\ldots, n.
\end{align}

Since $\Phi(I_n)$ is a nilpotent, $\Phi(\alpha E_{11})$ is a nilpotent as well by Lemma \ref{lem:zp}(b).
  After a similarity transformation, we can
assume that $\Phi(\alpha E_{11})=J_1\oplus\cdots \oplus J_m$ is a direct sum of its  Jordan blocks, all of
which have zero diagonals.  Since $\Phi(\alpha E_{11})^2\neq 0$, we can
further assume that $J_1$ is of size at least $3$; namely,
$$
J_1 = \begin{pmatrix}
     0  &  1       & \cdots &  0\\
          &        & \ddots &    \\
     0  &  0        & \cdots &  1\\
      0  &  0         & \cdots &  0\\
    \end{pmatrix}.
$$
Since $E_{1j}E_{11}=E_{11}E_{j1}=0$, we see that the first and the second  columns  of  $\Phi(\alpha E_{1j})$ are zero columns, and
the second and the third rows of $\Phi(\alpha E_{j1})$ are zero rows for $j=2,\ldots, n$.

Denote by $R_j$ the first row of  $\Phi(\alpha E_{1j})$, and by $C_j$
the third column of $\Phi(\alpha E_{j1})$ for $j=2,3, \ldots, n$.
Let
$$
R= \begin{pmatrix}
  R_2\\
  R_3 \\
  \vdots \\
  R_n
\end{pmatrix}_{(n-1)\times r}
\quad\text{and}\quad
C= \begin{pmatrix}
  C_2 &   C_3 &  \cdots &
  C_n
\end{pmatrix}_{r\times (n-1)}.
$$
The conditions \eqref{eq:ijkl0} and \eqref{eq:1jj1} tell us that $R_iC_j =1$ whenever $i=j$, and $0$ whenever $i\neq j$.
In other words,  $RC=I_{n-1}$.
Note that the first and second columns of $R$ are both the
zero columns.  On the other hand, since the third row of $C$ is the zero row, we can replace the third column of $R$ by the zero column to get
a new $(n-1)\times r$ matrix $R'$  such that $R'C=RC=I_{n-1}$.
Therefore, $R'$ has rank at least $n-1$. Since the first three columns of $R'$ are zero, we have
$r-3\geq n-1$.
This contradiction establishes the assertion.
\end{proof}

%If $r > n+1$ or $n= 1$,  even a ring homomorphism from $\bM_n(\mathbb{F})$ into $\bM_r(\mathbb{F})$ can carry a far more complicated form.
The unital ring homomorphism from $\IF$ into $\bM_2(\IF)$ given by \eqref{eq:derivation} in Example \ref{eg:ringhom}   shows that Proposition \ref{prop:goodhome-n=r} does not hold  when $r=2n =2$.

\begin{comment}
\begin{example}\label{eg:counter-ringhom}\label{new}
 Let $\mathbb{F}$ be a purely transcendental extension over another field $\mathbb{K}$, for example $\mathbb{R}/\mathbb{Q}$.
  According to \cite[Corollary 1' in p.~124]{SZbook75},
there is  a nonzero additive derivation $x\mapsto x'$  of  $\mathbb{F}$.
Consider the unital ring homomorphism $\Phi: \bM_1(\mathbb{F}) \to \bM_2(\mathbb{F})$ defined by
$$
                        \left(a\right)\mapsto \left(\begin{array}{cc} a & a' \\ 0 & a \\ \end{array}\right).
$$
It is far away from being an inner homomorphism.
\end{example}
\end{comment}

%\section{Linear map with range with trivial multiplication}\label{s:trivial-mul}

%It follows from Theorem \ref{main2} that the range of a zero product preserving linear map carries trivial product structure
%if $\Phi(I_n)=0$.
The following theorem demonstrates the structure of the range space
$\bV=\Phi(\bM_n(\IF))$ when $\Phi: \bM_n(\IF) \rightarrow \bM_{r}(\IF)$ is a linear map such that
$\Phi(A)\Phi(B)=0$ for all $A,B\in \bM_n(\IF)$.
In particular,  $\Phi(\bM_n(\IF))$ has
dimension at most $r^2/4$ in this case.

\begin{theorem} \label{main3}
Let $\bV$ be a vector subspace of $\bM_r(\IF)$.
\begin{itemize}
\item[{\rm (a)}]
Suppose that there is an invertible matrix $S$ in $\bM_r(\IF)$ such that
$S^{-1}\bV S$ consists of matrices of the form
\begin{equation}\label{Zform}
\begin{pmatrix}
0_p & Z_{12} & XB_1 \cr
0_p & 0_p & 0\cr
0 & B_2 Y & 0_q\cr
\end{pmatrix} \quad \text{with $Z_{12}\in \bM_{p}(\IF)$, $X \in \bM_{p,k_1}(\IF)$
 and $Y \in \bM_{k_2,p}(\IF)$,}
\end{equation}
for some nonnegative integers  $p, k_1, k_2$ with $k_1 + k_2 \le q=r-2p$, and fixed matrices
$B_1 \in \bM_{k_1,q}(\IF)$ and $B_2 \in \bM_{q,k_2}(\IF)$ 
with $B_1B_2 = 0_{k_1,k_2}$.
Then $\bV$ has trivial multiplication.
Moreover, the dimension of $\bV$ is at most $p(r-p)\leq r^2/4$.

\item[{\rm (b)}] If $\IF$ has more than $r/2$
elements and $\bV$ has trivial multiplication, then we can represent $\bV$ as in \eqref{Zform} such that
$p$ is the maximal rank of matrices in $\bV$, and $B_1, B_2$ are full rank matrices.
\end{itemize}
\end{theorem}

%\begin{proof}%[Proof of Theorem \ref{main3}]
\noindent
\it Proof. \rm (a) Since $B_1 B_2=0$, the product $ZZ'=0$ for all $Z,Z'\in \bV$.
Moreover, since a matrix in $\bV$ is determined by
$Z_{12} \in\bM_p(\IF)$, $X \in \bM_{p,k_1}(\IF)$ and $Y\in \bM_{k_2,p}(\IF)$,
 the dimension of $\bV$ is bounded by
$$p^2 + p(k_1+k_2) \le p^2 + pq = p(r-p) \le r^2/4.$$

\rm (b)
Let  $Y \in \bV$ have the maximal rank $p$
among the matrices in $\bV$. Because $Y^2 = 0$,
we may apply a similarity transform and assume that
$$Y = \begin{pmatrix} 0_p & I_p & 0 \cr
0_p & 0_p & 0 \cr
0 & 0 & 0_q\cr\end{pmatrix}$$
with $2p+q = r$.

Let $Z\in \bV$.
Since $YZ = ZY = 0$,
it follows that
\begin{equation}\label{Zform2}
Z = \begin{pmatrix}  0_p & Z_{12} & Z_{13} \cr
0_p & 0_p & 0 \cr
0 & Z_{32} & Z_{33} \cr\end{pmatrix}.
\end{equation}
We claim that $Z_{33}=0$ when $q\geq 1$.
Note that $tY + Z$ has rank at most $p$ for all $t \in \IF$.
Let $G(t,j,k)$ be a submatrix of
$\begin{pmatrix} tI_p + Z_{12} & Z_{13} \cr Z_{32} & Z_{33}\cr\end{pmatrix}$
of size $p+1$ including the leading principal  submatrix $tI_p + Z_{12}$ and the
$(j,k)$ entry, $\gamma_{jk}$ say, of $Z_{3,3}$  as the $(p+1,p+1)$ entry.
Then, 
$\det(G(t,j,k))$
is a polynomial  with leading term $\gamma_{jk} t^p$,
and $\det(G(t,j,k)) = 0$
 for all $t \in \IF$.
 Since $\IF$ has more than   $r/2 \ge p+1/2$ elements, namely, $\IF$ has at least $p+1$ elements,
 $\det(G(t,j,k))$ is the zero polynomial and $\gamma_{jk} = 0$.
Because this is true for any entry $\gamma_{jk}$ of $Z_{33}$, we see that
 $Z_{33} = 0_q$.
Consequently, every $Z$ in $\bV$ has the form
(\ref{Zform2}) with $Z_{33} = 0_q$ and
$Z_{13}Z_{32}=0_p$.

Let $\{r_1,\ldots r_{k_1}\}$ be a basis for the vector space spanned by the row vectors
of matrices $Z_{13}$ with all $Z \in \bV$, and $\{c_1,\ldots, c_{k_2}\}$ be
a basis for the vector space spanned by the column vectors of matrices in  $Z_{32}$
with all $Z \in \bV$.
Let  $B_1 \in \bM_{k_1,q}(\IF)$ have row
vectors $r_1, \ldots, r_{k_1}$, and  $B_2\in \bM_{q, k_2}(\IF)$ have column vectors $c_1,\ldots, c_{k_2}$.
We can think of all $r_i^\T$ and all $c_j$ as vectors in $\IF^q$ with $r_i c_j=0$.
In other words, $B_1B_2=0$.  It follows that the nullity of the rank $k_1$ matrix $B_1$ 
is at least $k_2$, which is the rank of $B_2$.
The rank–nullity theorem implies that $q-k_1 \geq k_2$.

For any $Z\in \bV$, we have $Z_{13} = XB_1$ for some $X \in \bM_{p,k_1}(\IF)$
and $Z_{32}   =B_2 Y$ for some $Y \in \bM_{k_2,q}(\IF)$.
Hence, $Z \in \bV$ has the asserted form.
\qed

Our proof used ideas in \cite{SM09}, in which
the structure of a vector subspace $\bW$ of $\bM_r(\IF)$ carrying trivial Jordan product is given. 
In particular, the following was obtained.

\begin{proposition}[{\cite[Theorem 4]{SM09}}]\label{prop:ZJP}
  Let $\bW$ be a vector subspace of $\bM_r(\IF)$ consisting of square zero matrices.
  %such that $A^2=0$ for all $A\in \bW$.
   Suppose that $\IF$ has more than $r/2$ elements,
   and all matrices in $\bW$ have rank at most $p$.
   Then $\bW$ has dimension at most $p(r-p)\leq r^2/4$.
\end{proposition}

Let $\IF=\IR$ or $\IC$, and
let $\Phi: \bM_n(\IF)\to \bM_r(\IF)$ be a real or complex linear map preserving double zero products.
  If $\Phi(I_n)=0$, by Remark \ref{rem:CxDZP} we see that
$\Phi(A)^2=0$ for all $A\in \bM_n(\IF)$.  It then follows from Proposition \ref{prop:ZJP} that
the range space $\bW=\Phi(\bM_n(\IF))$ has dimension at most $r^2/4$.

%\newpage

\section*{Acknowledgment}

C.-K. Li is an affiliate member of the Institute for Quantum Computing,
University of Waterloo. His research is supported by
%USA NSF grant DMS 1331021 and
Simons Foundation Grant 851334. This
research
started during his academic visit to Taiwan in 2018, which was supported by grants from Taiwan MOST.
He would like to express
his gratitude to the hospitality of several institutions there, including the
Academia Sinica,
National Chung Hsing University,  National Sun
Yat-sen University, and National Taipei University of Technology.

M.-C. Tsai, Y.-S. Wang and N.-C. Wong are supported by Taiwan MOST grants 110-2115-M-027-002-MY2,
111-2115-M-005-001-MY2 and  110-2115-M-110-002-MY2,
respectively.

\end{document}